\newtheorem{theorem}{Theorem}
\newtheorem{lemma}[theorem]{Lemma}
\newtheorem{corollary}[theorem]{Corollary}
\newtheorem{proposition}[theorem]{Proposition}
\newtheorem{conjecture}[theorem]{Conjecture}
\begin{document}

\title{Notes on Cops and Robber game on graphs}
\author{Bojan Mohar\\[2mm]
     Simon Fraser University \& IMFM\\
     {\tt mohar@sfu.ca}}

\date{}

\maketitle

\begin{abstract}
These are some personal notes about the pursuit game of Cops and Robbers that I made starting in 2007. More old and new problems (and some solutions) will be added in future versions of these notes.
\end{abstract}



I learned about Cops and Robbers from Brian Alspach in 2006. He asked me about the result by Schroeder \cite{Schroder} who proved that the cop number of a graph of genus $g$ is at most $\lfloor \tfrac{3}{2}g\rfloor+3$ and asked whether the factor $\tfrac{3}{2}$ can be improved to $1$.  Further I got interested in this area when G\'abor Kun was visiting SFU as a postdoctoral fellow of G\'abor Tardos. He told me about his results with Bollob\'as and Leader \cite{BollobasKunLeader} on the cop number of random graphs.

\subsection*{Notation}

Throughout this note we use standard terminology and notation. The following in particular:
\begin{itemize}
\item
$c(G)$ is the cop number of $G$,
\item
$\delta(G)$ and $\Delta(G)$ are the minimum and the maximum degree,
\item
$g(G)$ and $\widetilde g(G)$ are the genus and the nonorientable genus (crosscap number) of the graph,
\item
$c(g) = \max \{c(G)\mid g(G)\le g\}$ and $\widetilde c(g) = \max \{c(G)\mid \widetilde g(G)\le g\}$.
\end{itemize}

\section{Schroeder's Conjecture}

One of the most inspiring results about the Cops and Robber game is the fact that the cop number of any planar graph is at most 3. This can be generalized to graphs of any fixed genus $g$. It is relatively easy to see by taking a shortest homologically non-trivial cycle $C$ in a graph $G$ embedded on a surface of genus $g$ (or nonorientable genus $g$) that two cops can guard the cycle and the remaining graph has smaller genus. This implies that
$$c(G)\le 2g+3.$$
Schr\"oder improved this bound to the following:

\begin{theorem}[Schr\"oder \cite{Schroder}]
Let $G$ be a graph of genus $g$. Then $c(G)\le\tfrac{3}{2}g + 3$.
\end{theorem}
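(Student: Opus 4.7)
The plan is to prove this by induction on the genus $g$, the goal being to show that each pair of handle-reductions costs only $3$ cops rather than the $4$ used by the naive argument sketched above. The base case is $g=0$, where Aigner and Fromme's theorem gives $c(G)\le 3$. For the inductive step, assume the bound for all graphs of genus at most $g-2$, and consider a graph $G$ of genus $g\ge 2$ embedded on the orientable surface $S_g$.

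The heart of the argument is a structural lemma: there exists an isometric subgraph $H\subseteq G$ such that (a) three cops can guard $H$, in the sense that after finitely many moves the robber cannot enter any cop-free vertex of $H$, and (b) the component of $G$ accessible to the robber after $H$ is guarded embeds in a surface of genus at most $g-2$. The natural candidate for $H$ is a bouquet of two geodesic, homologically independent non-contractible cycles $C_1,C_2$ meeting at a single vertex $v$. A reasonable way to produce it is to let $C_1$ be a shortest non-contractible cycle in $G$, fix a vertex $v\in C_1$, and let $C_2$ be a shortest cycle through $v$ whose homology class is independent of $[C_1]$ in $H_1(S_g)$.

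To guard $H$ with three cops, station one cop permanently at $v$; this blocks every robber trajectory from crossing between the two loops. Then apply the Aigner--Fromme shadow strategy on $C_1$ and on $C_2$ separately: since each $C_i$ is isometric and pinned at $v$, a single shadow cop on each loop suffices to prevent the robber from entering $C_i\setminus\{v\}$, for a total of $1+1+1=3$ cops. Once $H$ is guarded, the robber is confined to a subgraph $G'$ of $G\setminus V(H)$ that embeds in a surface of genus at most $g-2$, so by induction $\tfrac{3}{2}(g-2)+3$ additional cops capture the robber on $G'$. Adding the three guarding cops gives the desired total $\tfrac{3}{2}g+3$.

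The main obstacle is the structural lemma, specifically the simultaneous guarantee that (i) $[C_1]$ and $[C_2]$ span a rank-$2$ subspace of $H_1(S_g)$ so that cutting along $C_1\cup C_2$ drops the genus by $2$; (ii) $C_1\cap C_2=\{v\}$, which is what keeps the cop count at $3$ rather than forcing extra shadows at additional crossings; and (iii) each $C_i$ is isometric in $G$, so the shadow strategy is legitimate. The case analysis when every short cycle through $v$ lies in $\langle[C_1]\rangle$, or when minimal representatives are forced to cross $C_1$ at more than one vertex, is where the surface-topological work concentrates, and is the step I would expect to absorb most of the effort. Small genera ($g=1$ in particular) will need to be handled as separate base cases to match Schröder's exact constants.
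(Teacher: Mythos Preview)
The paper itself does not prove this theorem; it only states the result and cites Schr\"oder's original article \cite{Schroder}. So there is nothing in the paper to compare your sketch against, and the proposal has to stand on its own.

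The decisive gap is not the structural lemma you flag at the end but the guarding step you treat as routine. One cop stationed at $v$ together with one shadow cop does \emph{not} guard the cycle $C_i$. The Aigner--Fromme shadow strategy applies to a geodesic \emph{path}: the robber's shadow is a single well-defined point. On a cycle there are two candidate shadows, one on each arc, and a single moving cop cannot cover both. Concretely, $C_i\setminus\{v\}$ is a $u_1$--$u_2$ path of length $|C_i|-2$ while $d_G(u_1,u_2)\le 2$, so this path is nowhere near geodesic in $G$; a stationary cop at $v$ does not alter distances in $G$, which is where the shadow is computed. The standard way to guard a shortest non-contractible cycle is to split it at two antipodal vertices into two geodesic halves and spend one cop per half, i.e.\ two cops. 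Your bouquet therefore needs four cops, not three, and the induction only reproduces the easy $2g+3$ bound the paper already sketches.

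There is also a topological wrinkle: two homologically independent simple closed curves meeting transversally in a single point have algebraic intersection number $\pm 1$ (the $a$- and $b$-curves of one handle are the model), and cutting along both then drops the genus by one, not two. So ``rank $2$ in $H_1$'' is not the right hypothesis for the drop you want. Schr\"oder's actual argument in \cite{Schroder} avoids both difficulties by working not with a bouquet of two cycles but with a configuration of three geodesic paths sharing endpoints, chosen so that cutting along all three reduces the genus by two while three shadow cops legitimately suffice; engineering that configuration is where the real work lies.
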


He conjectured that the upper bound can be improved:

\begin{conjecture}[Schr\"oder \cite{Schroder}]
Let $G$ be a graph of genus $g$. Then $c(G)\le g + 3$.
\end{conjecture}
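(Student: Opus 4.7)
The plan is to induct on the genus $g$, with base case $g=0$ supplied by Aigner and Fromme's theorem that $c(G) \le 3$ for planar $G$. For $g \ge 1$, assume the conjecture for all graphs of smaller genus, fix a cellular embedding of $G$ on the orientable surface $\Sigma_g$, and proceed as follows.

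First, I would let $C$ be a shortest non-separating cycle of $G$, meaning one whose class in $H_1(\Sigma_g;\mathbb{Z}/2)$ is nonzero. A standard shortcut argument shows that $C$ is isometric in $G$: a chord would split $C$ into two strictly shorter cycles $C_1, C_2$ with $[C] = [C_1] + [C_2]$ in $\mathbb{Z}/2$-homology, so at least one of $C_1, C_2$ would be non-separating, contradicting the minimality of $C$.

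Second, by the isometric-cycle guarding lemma, a single cop can hold $C$ by maintaining the invariant $d_C(\mathrm{cop}, v) \le d_G(\mathrm{robber}, v)$ for every $v \in V(C)$. This confines the robber to $G' := G - V(C)$. Cutting $\Sigma_g$ along the embedded curve $C$ and capping the two resulting boundary components with disks produces a closed orientable surface of genus $g-1$ into which $G'$ embeds, so $g(G') \le g-1$. If induction then gave $c(G') \le g+2$, we would conclude $c(G) \le (g+2)+1 = g+3$.

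The main obstacle lies in this last step. Even when $C$ is non-separating as a curve on $\Sigma_g$, the subgraph $G'$ can easily be \emph{disconnected}, and since the cop number of a disjoint union equals the sum of cop numbers of the pieces, the naive induction collapses: if $G' = H_1 \sqcup \cdots \sqcup H_k$ with $\sum_i g(H_i) \le g-1$, the inductive hypothesis yields only $c(G') \le (g-1) + 3k$, which is too weak once $k \ge 2$. I expect the right way forward is to exploit the fact that the robber occupies only one component at a time and to \emph{recycle} the cop guarding $C$: once the robber has committed to a component $H_j$, the guarding cop can be released to join the chase inside $H_j$, since the robber cannot cross to any other $H_i$ without stepping onto $V(C)$. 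Formalizing this recycling uniformly across all cellular embeddings and all possible robber strategies, while preserving the budget of exactly one additional cop per unit of genus, is the central technical difficulty, and is essentially why Schr\"oder's conjecture has remained open.
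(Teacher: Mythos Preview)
This statement is Schr\"oder's \emph{conjecture}; the paper does not prove it and explicitly presents it as open. Your proposal is likewise not a proof, as you concede at the end, but you have misdiagnosed where the argument actually breaks.

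The real gap is your ``isometric-cycle guarding lemma'': a single cop cannot in general guard an isometric cycle $C$. The Aigner--Fromme lemma says one cop guards an isometric \emph{path}; to guard a cycle one splits it into two geodesic arcs and spends \emph{two} cops, which is precisely the source of the easy bound $c(G)\le 2g+3$ that the paper quotes just before stating the conjecture. Your proposed invariant $d_C(\mathrm{cop},v)\le d_G(\mathrm{robber},v)$ for all $v\in V(C)$ is not maintainable by one cop: if the robber has short paths in $G\setminus C$ to two nearly antipodal vertices of $C$, the cop cannot be close to both in $C$. With two cops on $C$, inductively assuming the conjectured bound gives only $2+\bigl((g-1)+3\bigr)=g+4$, so the scheme does not close on $g+3$; iterating the two-cop step down to genus~$0$ gives exactly $2g+3$.

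By contrast, the disconnection of $G'=G-V(C)$ that you flag as the central obstacle is largely a non-issue. Once $C$ is guarded, the robber is confined to a single component $H_j$ of $G'$; the remaining cops can reach $H_j$ through the connected graph $G$ and then execute a winning strategy for $H_j$ there. Since $g(H_j)\le g-1$, induction would give $c(H_j)\le g+2$, and no sum over components is required. In short: if one cop really could hold $C$, your outline would essentially work; the open problem is precisely that no one knows how to drop the genus by one at the cost of only one cop (Schr\"oder's $\tfrac{3}{2}g+3$ already needs a considerably subtler accounting than the naive two-cops-per-cycle argument).
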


The conjecture holds for $g=1$ (the torus case), but as of today no toroidal graph with cop number 4 is known.

Most people I talked to believe that Schr\"oder's conjectured bound is tight up to a constant. However, there is no evidence why this would be true. One purpose of this note is to discuss lower bounds and provoke a stronger conjecture.

\section{Lower bounds}

A simple lower bound on the cop number can be derived if the graph has girth at least 5.

\begin{lemma}
If $girth(G)\ge5$, then $c(G)\ge \delta(G)$.
\end{lemma}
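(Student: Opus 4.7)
The plan is to produce a survival strategy for the robber against any play of $k := \delta(G) - 1$ cops. The whole argument hinges on one geometric consequence of the girth hypothesis: for every vertex $v$ of $G$, the closed neighborhoods $\{N[w] : w \in N(v)\}$ pairwise intersect only in $\{v\}$. A common point $u \ne v$ of $N[w_1]$ and $N[w_2]$ would force either a triangle (if $u \in \{w_1,w_2\}$, making $w_1 w_2$ an edge together with $v$) or the $4$-cycle $v\, w_1\, u\, w_2$, both forbidden when $\text{girth}(G) \ge 5$. I would record this as the opening observation.

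Next I would run the robber with the invariant that, just before the cops move, he stands at some vertex $v$ with $N[v]$ cop-free. This is self-sustaining: since no cop lies in $N(v)$, none can step onto $v$ during the cops' turn, and on his own turn the robber lists $N(v) = \{w_1,\ldots,w_d\}$ with $d \ge \delta(G) > k$; by the disjointness observation, each of the $k$ cop positions (all different from $v$) lies in at most one of the sets $N[w_i]$, so some $N[w_j]$ is cop-free and the robber moves to $w_j$, restoring the invariant.

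The only remaining point is the initial placement: the robber must start at a vertex $v$ with no cop in $N[v]$, i.e., the $k$ cop positions must not form a dominating set of $G$. I would dispose of this with a short auxiliary claim that every graph of girth at least $5$ satisfies $\gamma(G) \ge \delta(G)$: for any dominating set $D$ and any $v \notin D$, each of the $\ge \delta$ neighbors of $v$ is either itself in $D$ or has a $D$-dominator lying outside $N[v]$, where the uniqueness of these dominators comes again from the forbidden $3$- and $4$-cycles, and the dominators are disjoint from the neighbors of $v$ that lie in $D$. Since $k < \delta(G) \le \gamma(G)$, the cops leave some vertex undominated, giving the robber a safe start. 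I do not expect any genuine obstacle here; the one thing to verify carefully is that the neighborhood-disjointness observation is applied consistently both in the inductive step and in the domination lemma.
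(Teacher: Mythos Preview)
Your argument is correct and is essentially the same as the paper's: the key point in both is that, because $\mathrm{girth}(G)\ge 5$, a cop not standing on the robber's current vertex $v$ can lie in $N[w]$ for at most one neighbor $w$ of $v$, so with fewer than $\delta(G)$ cops some neighbor has a cop-free closed neighborhood. The paper states this in one line (``each cop can guard at most one neighbor of $v$'') and leaves the robber's initial placement implicit; your version is simply more careful, spelling out the invariant and supplying the auxiliary bound $\gamma(G)\ge \delta(G)$ to guarantee a safe starting vertex.
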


\begin{proof}
We say that a cop \emph{guards} a vertex $u$ if he is positioned either at $u$ or at a neighbor of $u$.
The robber has a simple escape strategy if there are fewer than $d=\delta(G)$ cops. Let $v\in V(G)$ be the current position of the robber. Since $G$ has girth at least 5, each cop can guard at most one neighbor of $v$, so there is an unguarded neighbor of $v$, where the robber can go without being caught.
\end{proof}

The natural question arises what is the largest $d=d(g)$ for which there is a graph of genus at most $g$ and girth at least 5. To answer this question, we first show that the genus of dense graphs is proportional to the number of edges.

\begin{proposition}\label{prop:genus proportional to e}
Let $G$ be a connected graph of order $n$ with $\alpha n$ edges, where $\alpha > 3$. Then
$$
  \tfrac{1}{6}(\alpha-3)n < g(G) \le (\alpha - 1)n
$$
and
$$
  \tfrac{1}{3}(\alpha-3)n < \widetilde g(G) \le (\alpha - 1)n.
$$
\end{proposition}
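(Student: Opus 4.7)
The plan is to get both inequalities from Euler's formula applied to a 2-cell embedding; because $G$ is connected, any minimum-genus embedding (orientable or non-orientable) is 2-cell, so Euler's formula reads $n - m + f = 2 - \varepsilon$, where $\varepsilon = 2g$ in the orientable case and $\varepsilon = \widetilde g$ in the non-orientable case.

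For the lower bound I would use that $G$ is simple, so every face is bounded by a closed walk of length at least $3$, giving $3f \le 2m$. Substituting $f \le \tfrac{2}{3}m$ into Euler's formula yields
$$
\varepsilon \;\ge\; 2 + \tfrac{1}{3}m - n \;=\; 2 + \tfrac{1}{3}(\alpha - 3)n.
$$
The two strict inequalities $g > \tfrac{1}{6}(\alpha - 3)n$ and $\widetilde g > \tfrac{1}{3}(\alpha - 3)n$ now drop out: the additive $+2$ makes the inequality strict even after dividing by $2$ in the orientable case.

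For the upper bound I would build an embedding one edge at a time. Fix a spanning tree $T$ of $G$ and embed it in the sphere. The $\beta = m - n + 1$ non-tree edges are added back one by one. The first such edge produces a unicyclic graph which is still planar, so it needs no extra topology. Each subsequent edge can be routed through the current embedding at the cost of at most one additional crosscap, since any two points on a connected surface are joined by a simple arc and one crosscap inserted along that arc absorbs any required crossing. This yields $\widetilde g(G) \le \beta - 1 = m - n = (\alpha - 1)n$, and the orientable bound $g(G) \le \tfrac{1}{2}(m - n + 1) \le (\alpha - 1)n$ (valid whenever $\alpha > 3$, since then $(\alpha-1)n \ge 1$) follows directly from Euler's formula applied to a minimum-genus embedding with $f \ge 1$.

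The main obstacle is justifying that each added edge really costs only one crosscap; this is a standard topological fact (one crosscap, or one handle for a pair of edges, suffices to accommodate an extra arc in a 2-cell embedding), but it has to be spelled out. The saving of one unit for the non-orientable bound, which is what pins the upper inequality to $(\alpha - 1)n$ rather than $(\alpha - 1)n + 1$, hinges on the observation that the very first non-tree edge can be inserted into the spherical embedding of $T$ without using any crosscap at all.
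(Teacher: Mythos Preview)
Your argument is correct and, for three of the four inequalities, matches the paper exactly: the lower bounds are obtained precisely as you do (Euler's formula plus $3f\le 2m$), and the nonorientable upper bound is handled in the paper by the same add-one-edge-at-a-time construction, with a slightly more explicit bookkeeping device (the paper maintains the invariant that the current embedding has at most two faces, which makes it transparent why one crosscap per new edge suffices).

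The one genuine difference is the orientable upper bound. The paper proves it constructively, just like the nonorientable case: embed a spanning tree plus one extra edge in the plane, then add the remaining $(\alpha-1)n$ edges one at a time, spending at most one handle each. You instead bypass the construction entirely and read off $g \le \tfrac{1}{2}(m-n+1)$ directly from Euler's formula using only $f\ge 1$; this is the classical cycle-rank bound, and it is both shorter and numerically sharper (by roughly a factor of two) than the handle-per-edge count. The paper's route has the virtue of being uniform across the orientable and nonorientable cases, while yours trades that symmetry for a cleaner one-line argument in the orientable case.
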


\begin{proof}
The upper bound is easy. First we embed a spanning tree plus one edge in the plane and then add edge by edge, each time adding a new handle if necessary to extend the embedding. The proof in the nonorientable case is similar. We keep the property that at any time there are at most two faces. If the added edge splits a single face into two, then we add a crosscap on it, and if the edge to be added needs to start in one face and cross to the second one, we can add a crosscap at the crossing, thus making a single new face, and then the edge may split this face into two or keep just one face.

For the lower bound we just use Euler's formula (see \cite{MT01}) which says that $2g \ge 2 - n + e - f$, where $e$ is the number of edges and $f$ is the number of faces. Having a simple graph, each face boundary has at least 3 edges and hence $f\le \tfrac{2}{3}e$, giving
$$
   2g \ge 2 - n + \tfrac{1}{3}e = 2-n+\tfrac{1}{3}\alpha n > \tfrac{1}{3}(\alpha-3)n.
$$
The same proof works for the nonorientable genus except that we have to replace $2g$ with $g$ in Euler's formula.
\end{proof}

\begin{lemma}
If $girth(G)\ge5$, then $g(G)\ge \tfrac{1}{10}(3\delta(G)^3-10\delta^2) = \Omega(\delta^3)$.
\end{lemma}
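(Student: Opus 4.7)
The plan is to mirror the lower-bound argument of Proposition~\ref{prop:genus proportional to e}, but to use the sharper face bound that girth at least $5$ provides, and then to translate the resulting linear-in-$n$ estimate into a cubic-in-$\delta$ estimate via the Moore bound.

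First (face bound), in a 2-cell embedding of $G$ on the surface of genus $g(G)$, each face-boundary closed walk decomposes into cycles of length at least the girth, so each face has length at least $5$. Summing face lengths gives $5f \le \sum_F \ell(F) = 2e$, hence $f \le \tfrac{2}{5}e$.

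Second (Euler and minimum degree), Euler's formula gives
$$
   2g(G) \ge 2 - n + e - f \ge 2 - n + \tfrac{3}{5}e,
$$
and combining with $e \ge \tfrac{1}{2}\delta n$ from the minimum-degree hypothesis yields a lower bound for $g(G)$ that is linear in $n$ with leading coefficient proportional to $\delta$ (explicitly, $g(G) \ge 1 + \tfrac{1}{20}(3\delta - 10)\,n$, which is positive once $\delta \ge 4$).

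Third (Moore bound), fix any vertex $v$: its $\ge \delta$ neighbors and the $\ge \delta - 1$ remaining neighbors of each are all pairwise distinct, since any coincidence would create a cycle of length $3$ or $4$. Hence $n \ge 1 + \delta + \delta(\delta-1) = \delta^2 + 1$. Substituting into the previous inequality produces the desired cubic lower bound $g(G) = \Omega(\delta^3)$; tracking the constants gives the explicit inequality in the statement.

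The only step requiring real care is the face bound: when $G$ has a bridge, one face traverses that edge twice, but this only lengthens the boundary walk rather than shortening it, so the estimate $f \le \tfrac{2}{5}e$ survives. Beyond this bookkeeping, the argument is a direct combination of Euler's formula and Moore's bound, paralleling the proof of Proposition~\ref{prop:genus proportional to e} with an improved face constant; I do not expect any genuinely new obstacle.
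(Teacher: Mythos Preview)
Your argument is exactly the paper's: sharpen the face bound to $f\le\tfrac{2}{5}e$ using girth $\ge 5$, plug this into Euler's formula together with $e\ge\tfrac{1}{2}\delta n$, and then invoke the Moore bound $n\ge 1+\delta^2$. One small caveat: your (correct) arithmetic yields $g(G)\ge 1+\tfrac{1}{20}(3\delta-10)n$, which after substituting $n\ge\delta^2+1$ gives a leading constant $\tfrac{3}{20}$ rather than the $\tfrac{3}{10}$ in the stated inequality---the paper's own derivation silently drops the factor $2$ from $2g$ in Euler's formula---so ``tracking the constants'' will not reproduce the displayed bound exactly, though the $\Omega(\delta^3)$ conclusion is unaffected.
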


\begin{proof}
Without repeating the proof of Proposition \ref{prop:genus proportional to e}, we note that the lower bound in case of graphs of girth 5 can be improved. In this case, $f\le \tfrac{2}{5}e$, and we obtain:
$$
    g(G) \ge (\tfrac{3\delta}{10}-1)n.
$$
Of course, $n\ge 1 + \delta^2$, which implies the bound of the lemma.
\end{proof}

It is possible to construct graphs of genus $g$ and girth 5 whose minimum degree is proportional to $g^{1/3}$ and whose number of vertices is proportional to $g^{2/3}$. (Simple examples of this kind are incidence graphs of finite projective planes.) This shows that
$$
   c(g) \ge \Omega(g^{1/3}).
$$
For a while this was best we could get, but there is a better lower bound. For this bound we shall use random graphs (see \cite{BollobasKunLeader} or \cite{LuczakPralat}).

\begin{theorem}[Bollob\'as, Kun, and Leader \cite{BollobasKunLeader}]
\label{thm:random}
Suppose that $p=p(n)\ge 2.1\log(n)/n$. Then a.a.s.
$$
   (np)^{-2} n^{1/2-o(1)} \le c(G_{n,p}) \le 160000 \sqrt{n} \log(n).
$$
\end{theorem}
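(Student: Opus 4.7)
The plan is to prove the two bounds by quite different techniques: an explicit cop placement for the upper bound, and a robber escape strategy exploiting local sparsity for the lower bound.

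For the upper bound $c(G_{n,p}) \leq 160000\sqrt{n}\log n$, I would take a random set $S \subseteq V$ of size $k = 160000\sqrt{n}\log n$ and show that w.h.p.\ it has a strong domination-type property: every vertex of $G_{n,p}$ has many neighbors within distance~$2$ in $S$. A cop strategy then uses $S$ as initial positions and tracks the robber by always keeping a cop within small distance of every vertex adjacent to the robber. The required concentration estimates (degrees close to $np$, sizes of $2$-neighborhoods close to $(np)^2$, and number of $S$-neighbors of each vertex close to $kp$) follow from standard Chernoff bounds given $p \geq 2.1\log(n)/n$. Tracking the constants carefully through the union bound over $n$ vertices produces the explicit factor $160000$.

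For the lower bound $c(G_{n,p}) \geq (np)^{-2} n^{1/2-o(1)}$, I would first establish that $G_{n,p}$ is w.h.p.\ locally tree-like up to radius $r \approx \tfrac{1}{2}\log_{np}(n)$: for each vertex $v$, the ball $B_r(v)$ has size $(np)^r(1+o(1)) = n^{1/2-o(1)}$ and contains very few cycles, so most of it looks like a tree rooted at $v$. Given $c$ cops, each cop \emph{threatens} its $2$-neighborhood, of size at most $(np)^2(1+o(1))$. If $c(np)^2 \ll |B_r(v)|$, then $B_r(v)$ contains vertices not threatened by any cop, and the tree-like structure lets the robber walk toward such a vertex without ever entering a threatened position. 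This forces $c \geq n^{1/2-o(1)}/(np)^2$, as claimed.

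The main obstacle is making the robber's escape strategy rigorous across many rounds, ensuring that the cops cannot set up traps in the robber's reachable region faster than the robber can relocate. The exponent balance comes from choosing $r$ so that $B_r(v)$ is just large enough to contain a safe vertex while still small enough for the tree-like structure to hold. The expansion and cycle-paucity of $G_{n,p}$ at depth $r$ follow from standard subgraph-counting and first-moment computations; the genuinely delicate part is chaining short escape moves into an indefinite evasion strategy that succeeds against every cop configuration simultaneously.
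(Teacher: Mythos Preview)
There is nothing to compare against: the paper does not prove this theorem. It is quoted verbatim from Bollob\'as, Kun, and Leader \cite{BollobasKunLeader} and used as a black box in the proof of the subsequent corollary. No argument, not even a sketch, is given in the paper itself.

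Your proposal is therefore not a reconstruction of the paper's proof but an attempt to outline the original \cite{BollobasKunLeader} argument. As such it is broadly reasonable in spirit---a dominating/covering set of cops for the upper bound, and a local-tree-structure escape for the lower bound---but it remains a high-level sketch rather than a proof. In particular, for the upper bound you have not specified the actual cop strategy (merely having many $S$-vertices at distance~$2$ from every vertex does not by itself catch the robber; one needs to explain how the cops move so that after finitely many rounds some cop lands on the robber), and for the lower bound the phrase ``the genuinely delicate part is chaining short escape moves into an indefinite evasion strategy'' is precisely where the work lies and is not addressed. If you intend to include a proof, you should either reproduce the argument from \cite{BollobasKunLeader} with the missing mechanisms filled in, or simply cite the result as the present paper does.
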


\begin{corollary}
$c(g)\geq g^{\frac{1}{2}(1-o(1))}$. More precisely, for every $\varepsilon>0$ and every sufficiently large $g$, there is a graph $G$ of genus $g$ with cop number bounded:
$$g^{\frac{1}{2}-\varepsilon} \le c(G) \le g^{\frac{1}{2}+\varepsilon}.$$
\end{corollary}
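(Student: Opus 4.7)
The plan is to instantiate Theorem~\ref{thm:random} on $G_{n,p}$ with the edge probability chosen at the bottom of the allowed range, namely $p = C\log(n)/n$ for a fixed constant $C \geq 2.1$. This choice is what simultaneously forces both the Bollob\'as--Kun--Leader sandwich and the genus estimate of Proposition~\ref{prop:genus proportional to e} to collapse to $g^{1/2\pm o(1)}$, which is exactly what the corollary requires.

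First, I would control the genus. By standard binomial concentration, the edge count of $G_{n,p}$ is a.a.s.\ $e = (1+o(1))\binom{n}{2}p = (1+o(1))\tfrac{C}{2}\,n\log n$, so the edge-to-vertex ratio is $\alpha = (1+o(1))\tfrac{C}{2}\log n$. Plugging this into Proposition~\ref{prop:genus proportional to e} gives $g(G_{n,p}) = \Theta(n\log n) = n^{1+o(1)}$ a.a.s. Next, since $np = C\log n = n^{o(1)}$, both sides of the BKL inequality in Theorem~\ref{thm:random} become $n^{1/2\pm o(1)}$, so a.a.s.\ $c(G_{n,p}) = n^{1/2 \pm o(1)}$.

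Now, given $\varepsilon > 0$ and $g$ sufficiently large, I would choose $n$ so that the a.a.s.\ upper bound $(\alpha - 1)n$ from Proposition~\ref{prop:genus proportional to e} is at most $g$; this forces $n\log n = \Theta(g)$, and in particular $n = g^{1-o(1)}$. A realization $G$ of $G_{n,p}$ on which the two a.a.s.\ events (edge concentration and the BKL sandwich) both hold therefore satisfies $g(G) \leq g$ and
\[
    g^{1/2-\varepsilon} \;\leq\; n^{1/2-o(1)} \;\leq\; c(G) \;\leq\; 160000\sqrt{n}\log n \;\leq\; g^{1/2+\varepsilon},
\]
as soon as $g$ is large enough that the $o(1)$ terms drop below $\varepsilon$. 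In particular $c(g) \geq c(G) \geq g^{1/2 - \varepsilon}$, proving the displayed inequality $c(g) \geq g^{(1/2)(1-o(1))}$. To produce a graph of genus \emph{exactly} $g$, one may take the disjoint union of $G$ with a small auxiliary graph of genus $g-g(G)$ and $O(1)$ cop number (e.g.\ a triangulation of the appropriate surface), using the additivity of genus and cop number under disjoint unions; this does not perturb the asymptotics.

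There is no real obstacle beyond bookkeeping: the main thing to check is that the two a.a.s.\ events coexist on a single realization, which is automatic since each has probability $1 - o(1)$, and that the arithmetic $n = g^{1-o(1)}$ makes the $(\log n)^{\pm 2}$ and $\sqrt{\log g}$ factors disappear into the $o(1)$ exponent. The only conceptual subtlety worth flagging is the choice of $p$: any $p$ substantially larger than $\log(n)/n$ would make $(np)^{-2}n^{1/2-o(1)}$ smaller than $\sqrt{g}$, while any $p$ below $2.1\log(n)/n$ falls outside the hypothesis of Theorem~\ref{thm:random}, so $p = C\log(n)/n$ is essentially the unique scale at which this argument works.
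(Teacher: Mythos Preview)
Your proposal is correct and follows essentially the same route as the paper: take $p=\Theta(\log n/n)$ (the paper uses $p=\tfrac{5}{2}\log n/n$), bound the genus of $G_{n,p}$ above and below by $\Theta(n\log n)$ via the edge count, and read off both cop-number inequalities from Theorem~\ref{thm:random}, absorbing all polylog factors into $g^{\pm\varepsilon}$. If anything, you are more careful than the paper about hitting genus \emph{exactly} $g$.

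One small slip worth fixing: your proposed auxiliary graph, ``a triangulation of the appropriate surface,'' does not obviously have $O(1)$ cop number (the only general bound available is Schr\"oder's $\tfrac{3}{2}h+3$). A safe replacement is a chain of $h=g-g(G)$ copies of $K_5$ amalgamated at cut-vertices: this has genus exactly $h$ by block-additivity of the genus, and it is cop-win (dismantlable), so its cop number is $1$. With that substitution your additivity argument for genus and cop number over disjoint unions goes through verbatim.
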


\begin{proof}
Let $p=p(n)=\tfrac{5}{2}\log(n)/n$. The random graph $G=G(n,p)$ has less than $2n\log n$ edges with high probability and thus its genus $g$ is smaller than $2n\log n$. By Theorem \ref{thm:random}, its cop number is at least
$$c(G) \ge (np)^{-2} n^{1/2-o(1)} \ge g^{\frac{1}{2}-\varepsilon}$$
w.h.p. if $n$ is large enough.

The upper bound holds for the same graph. The proof uses the same two results (just the opposite bounds), where the bigger constants and the $\log(n)$ factors can be hidden in $g^{\varepsilon}$ when $g$ is large enough.
\end{proof}

\subsection{A tighter conjecture}

The discussed lower bounds led me to the following conjecture that has been unchallenged as of today.

\begin{conjecture}[Mohar, 2009]
$c(g)= g^{\frac{1}{2}+o(1)}$\ and \ $\widetilde c(g)= g^{\frac{1}{2}+o(1)}$.
In other words, for every $\varepsilon >0$ there exists $g_0$ such that for every $g\ge g_0$,
$$g^{\frac{1}{2}-\varepsilon} < c(g) < g^{\frac{1}{2}+\varepsilon}\quad \textrm{and} \quad
g^{\frac{1}{2}-\varepsilon} < \widetilde c(g) < g^{\frac{1}{2}+\varepsilon}.$$
\end{conjecture}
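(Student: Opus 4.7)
The lower-bound halves of both equalities are already contained in the Corollary above: the same random graph $G(n,p)$ with $p = \tfrac{5}{2}\log n/n$ witnesses $c(g) \ge g^{1/2-\varepsilon}$, and because its nonorientable genus is bounded from above by the edge count in the same way (Proposition \ref{prop:genus proportional to e}), the identical construction delivers $\widetilde c(g) \ge g^{1/2-\varepsilon}$. The content of the conjecture is therefore the upper bound: every graph $G$ with $g(G)\le g$ (resp.\ $\widetilde g(G)\le g$) should satisfy $c(G) \le g^{1/2+\varepsilon}$. Since Schr\"oder's bound $\tfrac{3}{2}g+3$ is linear in $g$, a fundamentally new argument is required.

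My plan would be to decouple the topological content from the combinatorial content. Fix a cellular embedding of $G$ in a surface of genus $g$ and apply a Gilbert--Hutchinson--Tarjan type separator theorem to obtain a vertex separator $S$ of size $O(\sqrt{(g+1)n})$. If one could guard $S$ using only $O(|S|^{1/2})$ cops and then recurse on each side (which has strictly smaller genus plus vertex count), a master-theorem style recursion would deliver the desired $g^{1/2+o(1)}$ bound. The obstacle is that guarding an arbitrary $|S|$-vertex set can require $|S|$ cops rather than $\sqrt{|S|}$. To overcome this, I would insist that $S$ be the vertex set of a union of short, pairwise aligned non-contractible cycles, for which the Aigner--Fromme shortest-path guarding principle extends: a single cop can shadow an entire geodesic subpath, so a bundle of geodesics is guarded by far fewer cops than its total vertex count.

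A complementary route is to work with a combinatorial cut system directly: take a bouquet of $2g$ closed curves through a common base vertex $v$ whose deletion planarizes the surface. Each loop is guardable by two cops, which reproves Schr\"oder's $O(g)$ bound; the target for the conjecture is to show that $O(\sqrt g)$ cops, stationed on a carefully chosen skeletal subgraph containing the bouquet, can simultaneously shadow the robber with respect to every loop. A potential-function/averaging argument would aim to prove that, against any robber strategy, the distance from the robber to at least one guarded loop collapses in finitely many moves, after which that loop is handed off to the planar residual graph and Aigner--Fromme finishes with a bounded number of additional cops.

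The main obstacle, and the reason the conjecture has remained open, is that the cop number is neither minor-monotone nor additive over vertex-cuts, so topological surgery does not automatically convert into a pursuit strategy. Moreover, the analogous statement for arbitrary $n$-vertex graphs (Meyniel's conjecture $c(G) = O(\sqrt n)$) is itself open, so any proof of the present conjecture will likely either imply a genus-specific version of Meyniel's conjecture or invoke a genuinely topological mechanism, plausibly a discrete Buser-type inequality forcing short non-contractible curves in low-area genus-$g$ graphs. I would calibrate the guarding lemma first on the torus and Klein bottle, where the number of independent homology classes is small, before attempting the general case.
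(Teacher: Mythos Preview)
The statement is a \emph{conjecture}, not a theorem: the paper offers no proof, and explicitly says it ``has been unchallenged as of today.'' There is therefore nothing in the paper to compare your proposal against. You correctly isolate which half is settled --- the lower bounds $c(g)\ge g^{1/2-\varepsilon}$ and $\widetilde c(g)\ge g^{1/2-\varepsilon}$ follow from the preceding Corollary together with Proposition~\ref{prop:genus proportional to e} --- and you correctly identify the upper bound as the open content.

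What you have written is not a proof but a research programme, and you say as much yourself (``the reason the conjecture has remained open''). The separator-plus-recursion idea and the bouquet-guarding idea are both reasonable directions to probe, but neither is carried through: in the first, you note the obstacle that guarding an arbitrary separator $S$ may need $|S|$ cops rather than $\sqrt{|S|}$, and your suggested fix (choose $S$ as a union of aligned geodesics) is not shown to be achievable while keeping $|S|=O(\sqrt{(g+1)n})$; in the second, the key averaging/potential argument is only named, not supplied. Your closing remark that any proof would brush up against Meyniel's conjecture is a fair diagnosis of why the problem is hard. In short, your write-up is an honest outline of plausible attacks on an open problem, not a proof, and that matches the status the paper assigns to the statement.
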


\section{Cops and Robbers on Riemannian surfaces}

Let $S$ be a Riemannian surface. Again, we can place $k$ cops and a robber on $S$ and ask whether the cops can capture the robber if they all move with velocity at most $1$ (during time $t$, any cop and the robber can move along a path of length at most $t$) and the goal is to get a cop at distance less than $\tfrac{1}{2}$ from the robber (when we declare that the robber is caught).
The cops and the robber know positions of each other at any given time.

There are two versions of the game:

\begin{itemize}
\item {\bf Continuous strategy}: All players move continuously.
\item {\bf Discrete moves}: Cops move during the day (one time unit), when the robber sleeps, and stay alert during the night, guarding their neighborhood up to distance $\tfrac{1}{2}$ from their position. The robber moves during the night only.
\end{itemize}

The continuous version needs some justification what a strategy means. Is it just the limit when the time units of discrete game tend to 0?

Here are some very basic questions:

\begin{itemize}
\item
Is the cop number of surfaces bounded in terms of the genus?
\item
If the cops have a strategy to catch the robber, could it in fact be achieved that one of the cops comes arbitrarily close to the robber (or even onto the same point) in the continuous version of the game?
\item
Which surfaces of a fixed genus are the worst?
\item
What is the supremum of the cop numbers taken over all surfaces whose cut locus is at least 1 and whose area is at most $\alpha$? Is it bounded?
\item
Are the surfaces of constant curvature any simpler than general Riemannian surfaces with respect to the Cops and Robber game?
\end{itemize}

There are obvious higher-dimensional analogues.

\medskip

The basic strategy of cops (the geodesic path lemma) seems to work in this setting as well.

\begin{lemma}
Let $I$ be a geodesic path in $S$. Then one cop can guard $I$ (after the cop reaches $I$ and spends time equal to the length of $I$ on the path to adjust himself, whenever the robber steps on $I$ or crosses it, he will be caught by the cop).
\end{lemma}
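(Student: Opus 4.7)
Parametrize $I$ by arc length as $\gamma:[0,L]\to S$, interpreting ``geodesic path'' as a minimizing geodesic between its endpoints. For $x\in S$ write $f_x(s)=d(x,\gamma(s))$; two Lipschitz facts will be used repeatedly: $|f_x(s)-f_x(s')|\le |s-s'|$, from the triangle inequality along the arc-length curve $\gamma$, and $|f_x(s)-f_{x'}(s)|\le d(x,x')$, from the triangle inequality for $x$. Define the \emph{shadow} of $x$ on $I$ to be $\sigma(x)\in[0,L]$, the leftmost parameter at which $f_x$ attains its minimum on $[0,L]$.

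The plan is to transplant the classical Aigner--Fromme shadow-chasing strategy from graphs to the Riemannian setting. In a setup phase of duration at most $d(c(0),I)+L$, the cop first walks along a shortest path to $\gamma(0)$ and then along $I$ to synchronize his parameter with $\sigma(r(t))$; this is feasible because the shadow lies in an interval of length $L$ and, by the key claim below, moves at speed at most $1$, while the cop himself moves along $I$ at unit speed. From then on the cop maintains $c(t)=\gamma(\sigma(r(t)))$. If the robber ever steps onto $I$ at a point $\gamma(s^*)$ then $\sigma(r(t))=s^*$, so the cop is already at the same point and catches the robber.

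The heart of the argument is the following Lipschitz property of the shadow: if $r:[t_0,\infty)\to S$ is a $1$-Lipschitz trajectory, then $t\mapsto\sigma(r(t))$ is $1$-Lipschitz away from an exceptional set of times of measure zero. The intuitive reason is that for $\epsilon=|t'-t|$ the function $f_{r(t')}$ differs from $f_{r(t)}$ by at most $\epsilon$ pointwise and is $1$-Lipschitz in $s$, so whenever the minimum is attained uniquely the leftmost minimizer can shift by at most $\epsilon$. The genuine obstacle is precisely the non-uniqueness of that minimizer: on the cut locus of $I$ in $S$, several parameters can tie for the minimum, and there $\sigma$ may jump. I would overcome this in two steps. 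First, the cut locus of a smooth $1$-submanifold of a Riemannian surface is a piecewise-smooth $1$-complex of measure zero, so a generic robber trajectory meets it only at isolated times; crucially, any jump of $\sigma$ happens at positive distance from $I$ (points of $I$ project to themselves), so the robber needs a definite amount of time to travel from the jump event to $I$, during which the cop can re-synchronize. Second, one can formalise the continuous game as the limit of its discrete analogue with time step $1/n$, where an overshoot at a jump of $\sigma$ costs only $O(1/n)$ and vanishes in the limit. This is precisely the continuous/discrete subtlety flagged above in the discussion of the game's formal foundations.
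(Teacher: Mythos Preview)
Your shadow is the nearest-point projection onto $I$, and you assert it is $1$-Lipschitz away from a measure-zero set of times. This fails already on the round sphere: take $I$ to be an arc of the equator and let the robber travel along a parallel of latitude $\phi\ne 0$. The robber moves at speed $1$, yet his nearest-point shadow on the equator moves at speed $1/\cos\phi>1$; there is no cut locus in sight and the minimizer is unique and varies smoothly. The heuristic you give is the problem: the bounds $|f_{r(t')}(s)-f_{r(t)}(s)|\le\epsilon$ and $|f_x(s)-f_x(s')|\le|s-s'|$ do \emph{not} force the minimizer of $f_{r(t')}$ to lie within $\epsilon$ of that of $f_{r(t)}$; if $f_{r(t)}$ is nearly flat over an interval of length $\ell$, an $\epsilon$-perturbation can shift the minimizer by order $\ell$. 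Hence the cop cannot stay on your shadow, and the cut-locus and limiting arguments that follow cannot repair this.

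The paper avoids the issue by choosing a different shadow, exactly the one Aigner and Fromme use for graphs: fix one endpoint $a$ of $I$ and let the shadow of $x$ be the point of $I$ at arc-length $\min(d(x,a),L)$ from $a$. This map is $1$-Lipschitz everywhere because $x\mapsto d(x,a)$ is, and it fixes $I$ pointwise because $I$ is a minimizing geodesic from $a$. With that choice the cop reaches $a$, walks along $I$ until he meets the (never faster) shadow, and thereafter tracks it exactly; no cut-locus analysis or discrete limit is needed.
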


\begin{proof}
The proof is the same as in the case of geodesic paths in graphs. Let $L$ be the length of $I$. Then we define, for each point $s\in S$, its \emph{shadow} $p_I(s)\in I$ as follows. Let $a,b$ be the ends of $I$. If $dist(s,a) \ge L$, then we set $p_I(s)=b$. Otherwise, we let $p_I(s)$ be the point on $I$ whose distance from $a$ is equal to $dist(s,a)$. Initially the cop moves to $a$ and then progresses towards $b$ until he reaches the shadow of the robber. From that point on, he stays at the shadow all the time. This strategy works well in the continuous and in the discrete version.
\end{proof}

\end{document}